\DeclareSymbolFont{AMSb}{U}{msb}{m}{n}
\documentclass[11pt,a4paper,twoside,leqno,noamsfonts]{amsart}
\usepackage[a4paper,top=3.5cm,bottom=3.1cm,left=3.4cm,right=3.4cm]{geometry}

\usepackage[english]{babel} 
\usepackage[utopia]{mathdesign}
\DeclareSymbolFont{usualmathcal}{OMS}{cmsy}{m}{n}
\DeclareSymbolFontAlphabet{\mathcal}{usualmathcal}

\usepackage{indentfirst}
\usepackage[colorlinks,bookmarks]{hyperref} 
\usepackage{braket}
\usepackage{caption}
\usepackage{stmaryrd}
\usepackage{comment}
\usepackage{multirow}
\usepackage{booktabs}
\usepackage[all]{xy}

\setcounter{tocdepth}{1}
\usepackage{listings}
\usepackage{xcolor}

\usepackage{microtype}

\newtheorem{cor}{Corollary}[section]
\newtheorem{defn}[cor]{Definition}
\newtheorem{lem}[cor]{Lemma}
\newtheorem{thm}[cor]{Theorem}

\newtheorem{prop}[cor]{Proposition}

\newtheorem{problem}[cor]{Problem}
\newtheorem{question}[cor]{Question}

\definecolor{commentgreen}{RGB}{2,112,10}
\definecolor{eminence}{RGB}{108,48,130}
\definecolor{frenchplum}{RGB}{149,20,83}

\lstdefinelanguage{M2}
{
keywords={dim, singularlocus, saturate, ideal },
emph={random, mingens, matrix, map, det},
sensitive=false,
morecomment=[l]{--},
morecomment=[s]{/*}{*/},
morestring=[b]",
}

\lstnewenvironment{m2_code}[1][]
{\lstset{basicstyle=\tiny\ttfamily, columns=fullflexible,
language=M2,
keywordstyle=\color{frenchplum},
emphstyle={\color{eminence}},
commentstyle=\color{commentgreen},
numbers=none, numberstyle=\tiny,
stepnumber=2, numbersep=5pt, frame=single, #1}}{}

\newcommand{\bP}{\mathbb{P}}
\newcommand{\bC}{\mathbb{C}}
\newcommand{\bG}{\mathbb{G}}
\newcommand{\bL}{\mathbb{L}}
\newcommand{\bA}{\mathbb{A}}
\newcommand{\bD}{\mathbb{D}}
\newcommand{\bQ}{\mathbb{Q}}
\newcommand{\bZ}{\mathbb{Z}}

\newcommand{\sB}{\mathscr{B}}
\newcommand{\sC}{\mathscr{C}}
\newcommand{\sF}{\mathscr{F}}
\newcommand{\sH}{\mathscr{H}}
\newcommand{\sO}{\mathscr{O}}
\newcommand{\sS}{\mathscr{S}}
\newcommand{\sU}{\mathscr{U}}

\newcommand{\Spec}{\operatorname{Spec}}
\newcommand{\Var}{\operatorname{Var}}
\newcommand{\Pic}{\operatorname{Pic}}
\newcommand{\br}{\operatorname{Br}}
\newcommand{\coh}{\operatorname{Coh}}
\newcommand{\id}{\operatorname{Id}}
\renewcommand{\hom}{\operatorname{Hom}}

\title{Equivalence of K3 surfaces from Verra threefolds}
\author[G.~Kapustka]{Grzegorz Kapustka}
\address{Department of Mathematics and Informatics,
Jagiellonian University, {\L}ojasiewicza 6, 30-348 Krak{\'o}w, Poland.}
\email{grzegorz.kapustka@uj.edu.pl}
\author[M.~Kapustka]{Micha\l{} Kapustka}
\address{University of  Stavanger, Department of Mathematics and Natural Sciences, NO-4036 Stavanger, Norway and Institute of Mathematics of the Polish Academy of Sciences, ul. \'Sniadeckich 8, 00-656, Warszawa, Poland}
\email{michal.kapustka@uis.no}
\author[R.~Moschetti]{Riccardo Moschetti}
\address{University of  Stavanger, Department of Mathematics and Natural Sciences, NO-4036 Stavanger, Norway}
\email{riccardo.moschetti@uis.no}

\keywords{Verra fourfolds, Cohomology lattice of hyperk\"ahler manifolds, Grothendieck ring of varieties}
	\subjclass[2010]{14J35,18F30}
    
\begin{document}
\begin{abstract}
We study $(2,2)$ divisors in $\bP^2\times \bP^2$ giving rise to pairs of non-isomorphic, derived equivalent and $\bL$-equivalent K3 surfaces of degree $2$. In particular, we confirm the existence of such fourfolds as predicted by Kuznetsov and Shinder in \cite{KS}.
\end{abstract}
\maketitle
\section{Introduction}
Let $K_0(\Var/k)$ be the Grothendieck ring of varieties over a field $k$. It is the free abelian group generated
by isomorphism classes $[X]$ of algebraic $k$-varieties $X$, with relations 
$$[X]=[Z]+[U]$$
for every closed subvariety $Z\subset X$ with an open complement $U \subset X$. 
The product structure which makes $K_0(\Var/k)$ a ring is induced by products of varieties, namely
$$[X] \cdot [Y] = [X \times_k Y].$$
The unity is $[\Spec(k)]$, the class of a point. We will denote by $\bL$ the class of the affine line $[\bA^1]$, also called the Lefschetz motive. It is not a surprise that $K_0(\Var/k)$ encodes lots of geometrical relations: a remarkable example is the connection between zero-divisors in the Grothendieck ring and the rationality of cubic fourfolds, discussed in \cite{GS}. We consider the following two equivalence relations between varieties over $k$. 

\begin{defn}
Two varieties $X$ and $Y$ over $k$ are said to be 
\begin{itemize}
\item $\bL$-equivalent if\: $\bigl([X] -[Y]\bigr) \cdot \bL^r = 0$ in $K_0(\Var/k)$ for some integer $r \geq 0$.
\item $\bD$-equivalent if the bounded derived categories of coherent sheaves $\bD^b(X)$ and $\bD^b(Y)$ are equivalent.
\end{itemize}
\end{defn}

Notice that $\bL$-equivalence does not imply $\bD$-equivalence, an example is provided by $\bP^1 \times \bP^1$ and the blowup of $\bP^2$ in one point: while the motivic class of both is $\bL^2+2\bL+1$. These are non-isomorphic Fano varieties, which can not be derived equivalent by  \cite[Theorem 2.5]{MR1818984}.

This work is motivated by the opposite problem, namely finding whether $\bD$-equivalence implies $\bL$-equivalence. This was studied in \cite[Theorem 1.5]{IMOU16b} and \cite[Theorem 3.1]{EF} where counterexamples in the context of abelian varieties were found. Still, up to this moment no counterexample was found among simply connected varieties.

\begin{problem}[Problem 1.2, \cite{IMOU16b}; Conjecture 1.6, \cite{KS}]\label{Conj:KS}
Does $\bD$-equivalence implies $\bL$-equivalence for simply connected varieties?
\end{problem}

Non-trivial examples of this implication are provided by pairs of $\bD$-equivalent and $\bL$-equivalent varieties $X$ and $Y$ which are not stably birational equivalent, so that $[X] \neq [Y]$, see \cite[Corollary 2.6]{LaLu}. 
The first work, in a slightly weaker setting than $\bL$-equivalence, is given by the Pfaffian-Grassmannian Calabi-Yau threefolds of \cite{Bor}, while the very first non-trivial example is given in \cite{Mar}.

Other examples, involving for instance pairs of K3 surfaces, were studied in \cite{IMOU16a}, \cite{KS}, \cite{HL}, \cite{KapRam}, \cite{OR} and \cite{Manivel}. For an overview on this problem please consult \cite{IMOU16b}.

In this work we study another class of examples of derived equivalent K3 surfaces by means of Verra's fourfolds. These are the most elementary pairs of non birational and $\bL$ equivalent $K3's$.
We will work over the field of complex numbers $\bC$.

\begin{defn} \label{defn:verrafourfold}
A Verra fourfold $V_4$ is smooth fourfold obtained as a double cover $V_4 \to \bP^2 \times \bP^2$ branched along a divisor $V_3$ of bidegree $(2,2)$.
\end{defn}

Such a variety $V_3$ is also called a Verra threefold; it was studied by Verra in the paper \cite{Ver04}. A fourfold $V_4$ naturally comes with two projections $p_i: V_4 \to \bP^2$, $i=1,2$ which are families of two dimensional quadrics. These give rise to two K3 surfaces $S_i$, namely the double coverings of $\bP^2$ ramified along the sextic curves discriminant locus of the $p_i$. We will call them the K3 surfaces (geometrically) associated with $V_4$. These two K3 surfaces are equipped with Brauer classes $\beta_i\in H^2(S_i, \mathcal{O}^*_{S_i})$ given by the Brauer-Severi varieties, being the two components of the Hilbert scheme of lines of the Verra fourfold $V_4$. It is easy to see that the K3 surfaces $S_1$ and $S_2$ are always twisted derived equivalent, meaning that the categories $\bD^b(S_i,\beta_i)$ are equivalent. See \cite{CaldararuThesis} for a detailed introduction to twisted derived categories.
It has been proven in \cite{IKKR} that $V_4$ has an associated hyperk\"ahler fourfold $X$. 
This $X$ is related to $V_4$ and $S_i$ in the following ways:
\begin{itemize}
\item $X$ is the base of a $\bP^1$-fibration on the Hilbert scheme of $(1,1)$ conics on $V_4$.
\item $X$ is birational to the moduli spaces $M_v(S_i,\beta_i)$ of twisted sheaves with Mukai vector $(0,H_i,0)$ on $S_i$, for both $i=1,2$.
\end{itemize}

In this paper, we investigate the impact of these relations on the primitive cohomology spaces of $V_4$, $X$ and $S_i$. 
We are mainly interested in the case where the twists $\beta_i$ are trivial. 
In this case, we know that both $\bD^b(S_i)$ can be identified with two equivalent subcategories of $\bD^b(V_4)$. This means that the two K3 are $\bD$-equivalent. 
According to \cite[Section 2.6.2]{KS}, the surfaces $S_1$ and $S_2$ are also $\bL$-equivalent. 
Indeed, when the Brauer classes of the projections $\pi_i$ vanish we get the following relation in the Grothendieck ring of varieties over $\bC$:
$$[X]=[\bP^2]\cdot (1+\bL^2)+[S_i]\cdot \bL.$$
 These relations together give
\begin{equation*} 
\bigl([S_1] - [S_2]\bigr)\cdot \bL = 0.
\end{equation*}
The aim of this work is to answer a question of \cite{KS}, by providing examples of Verra fourfold with associated $\bL$-equivalent K3 surfaces being smooth, non-isomorphic, and with trivial Brauer classes.

Our main result is Theorem \ref{Thm:theorem non isom}, where we prove that for a very general $V_4$ in any $18$ dimensional Noether-Lefschetz type family of Verra fourfolds (i.e. a family defined by the condition of containing a fixed sublattice in its Picard lattice), the K3 surfaces $S_i$ are not isomorphic. 
This will lead to an $18$ dimensional family of examples of the situation described in \cite{KS}. 
Our theorem can be compared to the results of Nikulin and Madonna; they considered pairs of K3 surfaces of degrees $8$ and $2$ associated to a net of quadrics in $\mathbb{P}^5$, see \cite{NM1, NM2, CynkRams}. Furthermore, we propose a method to construct explicit examples of derived equivalent hyperk\"ahler manifolds which should provide a higher dimensional evidence for Problem \ref{Conj:KS}.

\medskip

\textbf{Plan of the paper.} Some basic notions concerning lattice theory and Verra fourfolds are recalled in Secion \ref{sec:preliminaries}. Section \ref{Sec: K3 surfaces I} is devoted to the study of the Picard lattice of $X$, and to the proof of the main Theorem \ref{Thm:theorem non isom}. In the last part of this section, we compare the primitive cohomology of $X$, $V_4$, and $S_i$ providing an Abel-Jacobi map between $V_4$ and $X$.
The explicit construction of the family $\sF$ of Verra fourfolds mentioned in the introduction takes place in Section \ref{Sec:Construction Family}, where we also provide an explicit example of a smooth Verra fourfold $V_4$ in our family by means of Macaulay2. Finally, Section \ref{Sec:Plans} contains some comparison with the case of cubic fourfolds and some possible developments in the case of double EPW sextics.

\section{Background and notations}  \label{sec:preliminaries}
This section is devoted to a short recap of the basic notions we will need in the paper. We remark that a lot of the background is also covered in \cite{IKKR} and in \cite{CKKM}. The interested reader will find there other viewpoints concerning the geometry of Verra fourfold and their relations with hyperk\"ahler manifolds. \medskip

\textbf{Lattices and cohomolgy. } A lattice is a free module over $\bZ$, together with a bilinear form with integer values. A good reference are the works of Nikulin \cite{N1, N2}. Consider a K3 surface $S$, i.e. a surface with trivial canonical bundle and irregularity equal to zero. We can associate to $S$ a weight-two Hodge structure given by $H^{2*}(S)$; its integer part is denoted by $\Lambda_S$ and, once endowed with the intersection pairing, has the structure of a lattice, called the Mukai lattice of $S$. An element $\eta$ in $\Lambda_S$ is called a Mukai vector; in this setting $\eta=(\eta_0,\eta_1,\eta_2)$ where $\eta_i \in H^{2i}(S,\bZ)$. The Mukai lattice of a K3 surface is isomorphic to $3U \oplus 2E_8(-1)$, where $U$ is the standard hyperbolic lattice and $E_8$ is the root lattice associated to the corresponding Dynkin diagram. We will also define the Mukai lattice associated to a hyperk\"ahler fourfold $X$, that is a projective irreducible holomorphic symplectic manifold, see \cite{B}. It is defined in a similar way as for the K3, and it is explicitly given by $4U \oplus 2E_8(-1)$, see \cite{CKKM}. \medskip

\textbf{Twisted varieties.} Let $X$ be an algebraic variety and consider an element $\alpha$ of $H^2_{\text{{\'e}t}}(X,\sO^*_X)$, in the \'etale topology. This cohomology group is isomorphic to the Brauer group $\br(X)$, whose elements are called Azumaya algebras. A pair $(X, \alpha)$ is called noncommutative variety; we refer to \cite[Appendix D]{KuzHyp} for a general introduction. Once we fix an element $\alpha$ as above, we can use it to construct the category $\coh(X, \alpha)$ of $\alpha$-twisted coherent sheaves on $X$. If we think $\alpha$ as an Azumaya algebra, the objects of $\coh(X, \alpha)$ are coherent sheaves of $\alpha$-modules. If we see $\alpha$ as a $2$-cocycle $\alpha_{\text{ijk}}$ in $H^2_{\text{{\'e}t}}(X,\sO^*_X)$, a twisted coherent sheaf is given by pairs $(E_\text{i},\phi_{\text{ij}})$, where $E_i$ is a coherent sheaf on $\sU_i$, where $X=\cup \sU_i$, and the $\phi_{\text{ij}}$'s satisfy the conditions $\phi_{\text{ii}}=\id$, $\phi_{\text{ji}}=\phi_{\text{ij}}^{-1}$ and $\phi_{\text{ij}} \circ \phi_{\text{jk}} \circ \phi_{\text{ki}} = \alpha_{\text{ijk}} \id$.
The category $\coh(X, \alpha)$ is abelian, so it make sense to consider the twisted derived category $\bD^b(X,\alpha)$. When $\alpha$ is trivial we get back the ordinary category of coherent sheaves and its derived category. If $S$ is a K3 surface, there is an alternative description of an element $\alpha$ in $\br(S)$ by means of lattice theory. By \cite[Section 2.1]{MR2166182}, we have $\br(S) \cong \hom(T_S,\bQ/\bZ)$. Moreover, an element $\alpha$ in $\br(S)$ can be identified with a surjective homomorphism from $T_S$ to ${\bZ}/{n \bZ}$, where $n$ is the order of $\alpha$. We can use the kernel of this morphism to lift $\alpha$ to an element $B \in H^2(S,\bQ)$. The element $B$ is called the lift of $\alpha$, and it is determined up to elements in $\Pic(S)/n$. In this paper we will only consider K3 surfaces twisted with certain $\alpha$ constructed starting from Verra fourfolds. \medskip

\textbf{Verra fourfolds. }
A Verra fourfold $V_4$, see Definition \ref{defn:verrafourfold}, has two structures of quadric fibration determined by the two projections to $\bP^2$. These structure provides many nice geometrical properties. In particular, they give the possibility to associate two K3 surfaces to $V_4$, as explained in the introduction, see \cite[Chapter I]{BPrym} for more details on this construction. These quadric fibrations have also associated Brauer classes, which allows us to consider the associated K3 surfaces as twisted varieties, see \cite{KuzQ}. Note that $V_4$ can be seen as an intersection of a cone over $\bP^2 \times \bP^2$ with a quadric hypersurface. The hyperplane section in this embedding provides a natural polarization of $V_4$. All together, starting from $V_4$ we get two associated twisted K3 surfaces $(S_i, H_i, \alpha_i)$, where $H_i$ is the polarization and $\alpha_i$ is the twist.
It was proven in \cite{MR2166182} that Verra fourfolds, together with cubic fourfolds containing a plane and smooth complete intersection of three quadrics in $\bP^5$ are the three geometric realisations of $2$-torsion elements in the Brauer group of a general K3 surface of degree 2. The moduli space of Verra fourfolds has dimension $19$, see \cite[Section 0.3]{IKKR}. If we select a very general Verra fourfold, the associated K3 surfaces $S_i$ have degree two, Picard group of rank one, and the Brauer classes  $\beta_i$ of $S_i$ are non trivial of order 2. By \cite[Proposition 2.1]{MR1738215} we get that requiring that one of the $\beta_i$ vanishes gives a Noether-Lefschetz divisors in the moduli space. This means that we get a fixed element different from the hyperplane class in the Picard lattice of the $S_i$. It is then clear that the Picard rank of $S_i$ is greater than or equal to two, being exactly two if we pick a very general Verra fourfold in this divisor class. Notice that $V_4$ can also be reconstructed from $(S_i,\beta_i)$, see \cite[Section 9.8]{MR2166182}.

The notion of twisted sheaves provides a link between Verra fourfolds and hyperk\"ahler fourfold, via the construction of moduli space. We refer to \cite{Yoshioka} for all the basic notions about such moduli spaces, and in particular about the existence of the moduli space of twisted sheaves we need in this paper. By \cite[Theorem 5.1]{CKKM}, there is an hyperk\"ahler fourfold $X$ associated to $V_4$, which is birational to the moduli spaces of twisted sheaves
$$M_{\eta_1}(S_1,\alpha_1) \stackrel{\operatorname{bir}}{\cong} M_{\eta_2}(S_2,\alpha_2),$$
where $\eta_i$ are the Mukai vectors $(0,H_i,0)$. \medskip

\textbf{Notations and remarks.} 
Let $Y$ be a variety of dimension $n$. We denote by $H^k_0(Y,\bZ)$ the $k$-th primitive cohomology group of $Y$ with coefficients in $\bZ$, namely the kernel of the map $L^{n-k+1}: H^k(Y) \to H^{2n-k+2}(Y)$ obtained by taking the exterior product with $\omega^{n-k+1}$, see \cite{Voisin}.
When we say that a variety is very general we mean that we choose it away from a countable union of divisors of the appropriate moduli space. We will work over the field of complex numbers $\bC$.

\section{K3 surfaces associated to families of Verra fourfolds} \label{Sec: K3 surfaces I}
Let us denote by $(S_1,H_1, \alpha_1)$ and $(S_2,H_2, \alpha_2)$ the two K3 surfaces of degree $2$ associated to a Verra fourfold $V_4$, with polarization $H_i$ and twist $\alpha_i$. In order to prove the main theorem, we will identify them in terms of their periods and Brauer classes. In the first part of this section we will construct, by means of lattice theory and Torelli theorem, two K3 surfaces $\tilde{S}_i$. Corollary \ref{all smooth} tells us that $\{S_1,S_2\}$ and $\{\tilde{S}_1,\tilde{S}_2\}$ coincide up to isomorphisms. This will be crucial in the second part for the proof of Theorem \ref{Thm:theorem non isom}, since we will be able to work on the $\tilde{S}_i$ instead of the $S_i$. The following lemma is the first step towards the proof that $S_1$ and $S_2$ are uniquely determined up to isomorphism.

\begin{lem} \label{only two K3} 
Let $X$ be the hyperk\"ahler fourfold associated to a very general Verra fourfold. There are at most two isomorphism types of polarized twisted K3 surfaces $(S, H, \beta)$ satisfying the following properties:
\begin{enumerate}
\item $H^2=2$;
\item $X$ is birational to $M_{\eta}(S,\beta)$, where $\eta$ is the Mukai vector $(0,H,0)$;
\item $\beta$ is non-trivial of order 2 and admits a lift $B$ with $B^2=B\cdot H=0$.
\end{enumerate}
\end{lem}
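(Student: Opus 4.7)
The strategy is to translate the enumeration into a question about primitive vectors in the Mukai lattice $\tilde H(X, \bZ) := H^2(X, \bZ) \oplus U$ of the hyperkähler fourfold $X$, and then to carry out a finite combinatorial check using the explicit description of the Picard lattice of $X$ for a very general Verra fourfold.

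First, I would invoke the twisted derived Torelli theorem of Huybrechts--Stellari together with Yoshioka's results on moduli spaces of twisted sheaves: a polarized twisted K3 $(S, H, \beta)$ satisfying condition (2) is recovered, up to isomorphism, from the $B$-shifted twisted Mukai lattice $\tilde H(S, B, \bZ)$ equipped with its weight-two Hodge structure, and this lattice is Hodge-isometric to $\tilde H(X, \bZ)$ via an isometry sending the Mukai vector $\eta = (0, H, 0)$ to a distinguished primitive class $v \in \tilde H(X, \bZ)$. Thus classifying the triples $(S, H, \beta)$ satisfying (1)--(3) reduces to classifying the admissible positions of such a vector $v$.

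Next, I would translate the three conditions into lattice constraints on $v$. Condition (1) forces $v^2 = 2$; the shape $\eta = (0, H, 0)$ forces the $H^0$- and $H^4$-components to vanish, which becomes a pair of orthogonality conditions against the two isotropic generators of the hyperbolic summand $U$ of $\tilde H(X, \bZ)$; condition (3) pins down $v$ modulo $2\tilde H(X, \bZ)$ and, together with $B^2 = B \cdot H = 0$, further imposes that the Brauer class read off from $v$ has order exactly $2$, so that $B$ does not already lie in $\Pic(S)$. For a very general $V_4$ the algebraic part of $\tilde H(X, \bZ)$ has explicitly known small rank, as computed in \cite{IKKR, CKKM}, and hence only finitely many primitive algebraic classes of square $2$ need to be inspected.

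Finally I would perform the finite check and exhibit at most two surviving classes. The two projections $p_1, p_2 \colon V_4 \to \bP^2$ manifestly produce two such vectors, naturally interchanged by the $\bZ/2$-symmetry swapping the two $\bP^2$-factors of the ambient space. The main obstacle will be ruling out spurious solutions: this should come down to a discriminant-form computation on the algebraic part of $\tilde H(X, \bZ)$, excluding vectors that fail primitivity, that have a nonvanishing $H^0$ or $H^4$ component, or for which the induced lift $B$ is actually algebraic (so that $\beta$ would be trivial, contradicting (3)).
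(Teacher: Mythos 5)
Your overall strategy---translate everything into the extended Mukai lattice of $X$ and count admissible lattice data---is in the same spirit as the paper, but as written the reduction contains two genuine gaps. First, classifying ``the admissible positions of a square-$2$ vector $v$'' is not the right translation of the problem. A twisted K3 surface $(S,\beta)$ cannot be reconstructed from the image of $(0,H,0)$ alone: to recover $S$ and $B$ from a Hodge isometry between the twisted Mukai lattice of $(S,B)$ and the extended Mukai lattice $\tilde\Lambda$ of $X$ one needs the position of the entire degree decomposition, i.e.\ of the isotropic classes $(0,0,1)$ and $(1,B,B^2/2)$, and it is precisely this hyperbolic summand that varies with $(S,\beta)$. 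Your ``orthogonality conditions against the two isotropic generators of the hyperbolic summand of $\tilde H(X,\bZ)$'' treat that summand as fixed, which begs the question. Second, the finiteness claim fails as stated: the algebraic part of $\tilde\Lambda$ for very general $V_4$ is indefinite of rank $3$ (isometric to $\langle 2\rangle\oplus U(2)$), hence contains infinitely many primitive vectors of square $2$. Any finite count must be a count of orbits under an appropriate group of Hodge isometries, which you never introduce, and the ``discriminant-form computation'' that is supposed to yield the answer \emph{two} is exactly the missing content rather than a routine verification.

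The paper's proof supplies the two inputs your sketch lacks. It identifies the twisted transcendental lattice $T_S(B)$ with $T_X$ (computed explicitly as $\langle -2\rangle\oplus U(-2)\oplus U\oplus 2E_8(-1)$), observes that the datum of $(S,\beta)$ amounts to an index-$2$ overlattice $T_S\supset T_S(B)$ compatible with the order-$2$ Brauer class, and invokes van Geemen's classification to see that there are exactly two such overlattices; each yields a period $[\sigma]=\pi_i([\sigma_B])$ for $i\in\{1,2\}$. It then uses Orlov's derived Torelli theorem together with the fact that a very general degree-$2$ K3 surface of Picard number $1$ has no nontrivial Fourier--Mukai partners (Hosono--Lian--Oguiso--Yau) to conclude that each period determines $S$ uniquely up to isomorphism. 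This last step is indispensable and absent from your proposal: a (twisted) derived Torelli statement only recovers the derived equivalence class, not the isomorphism class, so without the no-FM-partners input even a correct count of admissible periods would not bound the number of isomorphism types of $S$.
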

\begin{proof}
Let $\tilde{\Lambda}$ be the extended Mukai lattice of $X$, and assume that the twist $\beta$ admits a lift $B\in \frac{1}{2} H^2(X,\bZ)$ such that $B\cdot H=0$ and $B^2=0$. Notice that the Brauer class has order two because we have a $\bP^1$ bundle as Brauer-Severi variety, see \cite[Section 9.8]{MR2166182}, \cite[Section 1]{Yoshioka}. Let $\sigma \in H^2(S,\bC)$ be the period of $S$ and consider the twisted period 
$$\sigma_B=\sigma + \sigma \wedge B \in \tilde{\Lambda} \otimes \bC.$$  
It was proven in \cite[Theorem 3.19]{Yoshioka} that $H^2(X,\mathbb{Z})$ is perpendicular to the Mukai vector $\eta$ in $\tilde{\Lambda}$. Moreover, the twisted period $\sigma_B$ belongs to $\eta^{\perp}\otimes \bC$ and coincides with the period $x$ of $X$. Nevertheless, in order to avoid possible misunderstandings, we will keep naming them with their two different names.

The lattice $\Pic(S,B)=(\sigma_B)^{\perp}\subset \tilde{\Lambda}$ was studied in \cite{MSFano}, see in particular Lemma 3.1 there, and it can be written as
$$\Pic(S,B)=\bigl\langle(0,H,0), (0,0,1), (2,2B,0)\bigr\rangle.$$ 
Similarly, $\Pic(X)$ corresponds to $x^{\perp}\cap H^2(X,\mathbb{Z})$ and hence also to $\sigma_B^{\perp}\cap \eta^{\perp}\subset \tilde{\Lambda}$. It follows that $\Pic(X)$ in $\tilde{\Lambda}$ is generated by 
$$\bigl\langle(0,0,1),(2,2B,0)\bigr\rangle=\bigl\langle(0,0,1),(2,2B,B^2)\bigr\rangle\cong U(2),$$
where we denote by $U$ the standard hyperbolic lattice.
Let us write explicitly 
$$\tilde{\Lambda}=I \oplus J \oplus M \oplus N \oplus 2E_8(-1),$$ where $I, J, M, N$ are four copies of $U$.
Denote their corresponding standard bases in the following way: 
$$I=\langle i_1, i_2 \rangle, \quad J=\langle j_1, j_2 \rangle, \quad M=\langle m_1, m_2\rangle, \quad N=\langle n_1, n_2 \rangle.$$
By Eichler's criterion, see \cite[Theorem 2.9]{BHPV}, there is a unique embedding of $\Pic(S,B)$ in $\tilde{\Lambda}$ up to isometries. We can hence assume that 
$\eta=i_1+i_2$ and $$\Pic(X)=\langle m_1+n_1,m_2+n_2 \rangle.$$ 
As a consequence, the twisted transcendental lattice of $S$ is described as follows
\[
\begin{split}
T_S(B)&=\Pic(S,B)^{\perp}=\langle \eta, \Pic(X) \rangle^\perp = \\
&=\langle i_1-i_2, m_1-n_1,m_2-n_2 \rangle\oplus J\oplus 2E_8(-1)\cong\langle-2\rangle \oplus U(-2)\oplus U\oplus 2E_8(-1).
\end{split}
\]
Recall that the two periods $[\sigma_B]$ and $[x]$ are equal, hence we have $T_S(B)$ is equal to $T_X$, the transcendental lattice of $X$ inside $\eta^{\perp}$.

The projection $\pi: \tilde{\Lambda}\otimes \mathbb{C} \to H^2(S,\mathbb{C})$ induces a lattice embedding $T_X=T_S(B)\to T_S$ whose image is an index $2$ sublattice. We also have $\pi(\sigma_B)=\sigma$.
By \cite[Section 9.8]{MR2166182}, there are exactly two ways for $T_S(B)$ to be embedded in $T_S$ as an order $2$ sublattice. Each of them gives rise to a unique map $$\pi_i:T_S(B)\otimes \bC\to T_S\otimes \bC.$$
It follows that $\sigma=\pi_i(\sigma_B)$ for $i\in \{1,2\}$. Note that the class of the periods 
$$[\sigma_B]=[x] \in \bP( T_S(B)\otimes \bC)= \bP( T_X\otimes \mathbb{C})$$
is uniquely determined by $X$. 

By \cite[Theorem 3.3]{Orlov}, we know that $[\sigma] \in \mathbb{P}(T_S\otimes \mathbb{C})$ determines the derived equivalence class of $S$. Moreover, it has been proved in \cite[Theorem 1.17]{Hosonoetal} that a very general K3 surface of degree $2$ and Picard number $1$ does not admit any Fourier-Mukai partners, hence $[\sigma]\in \mathbb{P}(T_S\otimes \mathbb{C})$ determines $S$ uniquely up to isomorphisms. Since $[\sigma]\in \mathbb{P}(T_S\otimes \mathbb{C})$ is equal to one of the two projections $\pi_i(\sigma_B)$ we have at most two possible K3 surfaces $S$.
\end{proof}

We can now apply Lemma \ref{only two K3} to the hyperk\"ahler fourfold $X$.

\begin{cor}\label{cor: using S not isomorphic}
Let $V_4$ be a very general Verra fourfold with the associated hyperk\"ahler fourfold isomorphic to the moduli space of twisted sheaves on a polarized twisted K3 surface $(S,H,\beta)$, satisfying the assumptions of Lemma \ref{only two K3}. Then we have
$$(S,H,\beta) \cong (S_i,H_i,\alpha_i)$$
for one of the two K3 surfaces $S_i$ associated to $V_4$.
\end{cor}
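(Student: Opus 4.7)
The plan is to show that the pairs $(S_i, H_i, \alpha_i)$ coming from the quadric fibration structures of $V_4$ themselves satisfy the three hypotheses of Lemma \ref{only two K3}, so that the corollary reduces to a counting argument: Lemma \ref{only two K3} produces at most two isomorphism classes of such $(S,H,\beta)$, and the two pairs $(S_1,H_1,\alpha_1)$, $(S_2,H_2,\alpha_2)$ already exhaust the possibilities (possibly with coincidences).

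First I would verify each of the three conditions of Lemma \ref{only two K3} for $(S_i,H_i,\alpha_i)$. Condition $(1)$, i.e.\ $H_i^2 = 2$, is immediate since the K3 surfaces associated to the quadric fibrations $p_i: V_4 \to \bP^2$ are double covers of $\bP^2$ ramified along the discriminant sextic. Condition $(2)$, the birationality $X \stackrel{\mathrm{bir}}{\cong} M_{\eta_i}(S_i,\alpha_i)$ with $\eta_i=(0,H_i,0)$, is precisely the statement of \cite[Theorem 5.1]{CKKM} recalled in the Verra-fourfolds background. Condition $(3)$, the existence of a $B$-field lift $B_i$ of $\alpha_i$ with $B_i^2 = B_i \cdot H_i = 0$, is the delicate point: here one has to use that $\alpha_i$ is the Brauer class of a $\bP^1$-bundle (hence of order two as remarked in the background, following \cite[Section 9.8]{MR2166182}) and that the resulting class in $T_{S_i}/2$ comes from a Mukai vector orthogonal to $\eta_i$ in the extended lattice, forcing the existence of the required isotropic lift.

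Once the three conditions are checked, the proof proceeds formally: by Lemma \ref{only two K3} the set of isomorphism classes $(S,H,\beta)$ with $X$ birational to $M_{\eta}(S,\beta)$ and the stated lattice-theoretic conditions has cardinality at most two. Since $(S_1,H_1,\alpha_1)$ and $(S_2,H_2,\alpha_2)$ both belong to this set, they must account for all its members. Therefore any $(S,H,\beta)$ as in the statement must be isomorphic, as a polarized twisted K3 surface, to one of the $(S_i,H_i,\alpha_i)$.

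The main obstacle I foresee is the verification of condition $(3)$, namely producing an explicit lift $B_i$ of $\alpha_i$ with $B_i^2 = B_i \cdot H_i = 0$. This is not formal: one has to match the Brauer class coming from the geometric Brauer--Severi variety of the quadric fibration with the lattice description used in the proof of Lemma \ref{only two K3}, where the lift $B$ was chosen so that $(2, 2B, 0) \in \Pic(S,B) \subset \tilde{\Lambda}$. Morally this compatibility is the content of the identification between Brauer classes of quadric fibrations and twists of the associated K3 surface via Kuznetsov's results \cite{KuzQ}, together with the concrete choice of Mukai vector $\eta_i=(0,H_i,0)$ that pins down the orthogonal complement. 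Once this identification is made explicit, the remaining arguments are a direct application of Lemma \ref{only two K3}.
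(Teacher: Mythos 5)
Your verification that the geometric pairs $(S_i,H_i,\alpha_i)$ satisfy the three hypotheses of Lemma \ref{only two K3} is in line with what the paper does (it cites \cite[Section 5]{CKKM} and \cite[Section 2]{MR3705236} for exactly this), and your identification of condition $(3)$ as the delicate point is reasonable. The problem is in your concluding counting argument. From ``the set of admissible isomorphism classes has cardinality at most two'' and ``$(S_1,H_1,\alpha_1)$ and $(S_2,H_2,\alpha_2)$ both lie in this set'' you cannot conclude that they exhaust the set: if $S_1$ and $S_2$ happened to be isomorphic as twisted polarized K3 surfaces, they would occupy only one of the (up to) two classes, and there could exist a third pair $(S,H,\beta)$ satisfying all the hypotheses but isomorphic to neither $S_i$. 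Your parenthetical ``(possibly with coincidences)'' describes precisely the case in which the argument breaks down.

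The missing ingredient is a proof that $(S_1,H_1,\alpha_1)\not\cong(S_2,H_2,\alpha_2)$ in general, so that the two geometric pairs really do fill both available slots. The paper supplies this at the end of Section \ref{Sec:Construction Family}: it exhibits an explicit Verra fourfold for which the two discriminant sextics are not projectively equivalent (checked by a Macaulay2 computation), hence the polarized surfaces $(S_1,H_1)$ and $(S_2,H_2)$ are not isomorphic, and it states there that this computation ``completes the proof of Corollary \ref{cor: using S not isomorphic}.'' A correct write-up must either include such a non-isomorphy argument or replace the counting step by a direct identification of the two lattice-theoretic projections $\pi_1,\pi_2$ from the proof of Lemma \ref{only two K3} with the two geometric quadric fibrations of $V_4$; your proposal does neither.
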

\begin{proof} 
We can exploit results of \cite[Section 5]{CKKM} and \cite[Section 2]{MR3705236} in order to ensure that the K3 surfaces $S_i$ satisfy the conditions of Lemma \ref{only two K3}. It is then enough to observe that the $S_i$ are not isomorphic in general, and so they cover both the isomorphism classes of Lemma \ref{only two K3}. This will be shown by considering an explicit example, developed at the end of Section \ref{Sec:Construction Family}. 
\end{proof}

These uniqueness results will be fundamental in order to show that the two K3 surfaces that we will construct by using Torelli theorem coincide with the K3 surfaces geometrically obtained from the considered Verra fourfold. Keeping the notation that we introduced in the proof of Lemma \ref{only two K3}, let us consider the following two sublattices of $\tilde{\Lambda}$, both isomorphic to $U$:
\begin{equation*}
U_1=\langle m_1+n_1, m_2 \rangle \quad \text{and} \quad U_2=\langle m_2+n_2, m_1 \rangle.
\end{equation*}
With this choice we get $\tilde{\Lambda}=U_i \oplus \Lambda_{i}$, with $\Lambda_i$ being isometric to the K3 lattice $3U\oplus 2E_8(-1)$. 
The lattice $\Lambda_{i}$ is explicitly given by $\Lambda_{i}=\overline{U_i}\oplus I\oplus J\oplus 2E_8(-1)$, where
\begin{equation*}
\overline{U_1}=\langle n_2-m_2,n_1\rangle \quad \text{and} \quad \overline{U_2}=\langle n_1-m_1,n_2\rangle.
\end{equation*}

We can define new K3 surfaces $\tilde{S}_i$ via Torelli theorem, by using the periods obtained by projecting the element $x$ to $\Lambda_{i}$. In these cases $\eta$ gives a polarization $\tilde{H}_i$ of degree 2 and  $\tilde{B}_i={(m_i-n_i)}/{2}$ determines a Brauer class.

\begin{cor}\label{very general K3 are isomorphic} Let $V_4$ be a very general Verra fourfold. Then, up to isomorphisms, the set of K3 surfaces $\{\tilde{S}_1, \tilde{S}_2\}$ is a subset of the set of K3 surfaces $\{S_1, S_2\}$.
\end{cor}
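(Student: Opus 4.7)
The plan is to verify that each triple $(\tilde{S}_i, \tilde{H}_i, \tilde{\beta}_i)$ satisfies the three hypotheses of Lemma \ref{only two K3}; from there, the lemma itself gives the result, since the two isomorphism classes it permits are already realized by $(S_1, H_1, \alpha_1)$ and $(S_2, H_2, \alpha_2)$ (as used in the proof of Corollary \ref{cor: using S not isomorphic}). Hence each $\tilde{S}_i$ must be isomorphic to $S_1$ or $S_2$.

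The degree condition and the orthogonality conditions on the Brauer lift $\tilde{B}_i$ reduce to elementary lattice computations inside $\tilde{\Lambda}$. From $\eta = i_1 + i_2 \in I$ one reads off $\tilde{H}_i^2 = 2$; the isotropy of the basis vectors $m_i$, $n_i$ of $M$ and $N$, together with $m_i \cdot n_i = 0$, forces $\tilde{B}_i^2 = 0$, and the orthogonality $I \perp (M \oplus N)$ forces $\tilde{B}_i \cdot \tilde{H}_i = 0$. For $\tilde{\beta}_i$ to be non-trivial of order two, I would use that for a very general $V_4$ the Picard lattice of $\tilde{S}_i$ is generated by $\tilde{H}_i$ alone, so that the transcendental lattice $\tilde{H}_i^{\perp}$ in $\Lambda_i$ admits an explicit description; one then exhibits a transcendental class against which the pairing with $\tilde{B}_i$ is a non-integer half-integer, while the integrality of the pairing of $2\tilde{B}_i$ against all of $\Lambda_i$ pins down the order to be exactly two.

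The crux is the remaining condition that $X$ is birational to $M_{\eta}(\tilde{S}_i, \tilde{\beta}_i)$. Here I would argue through the twisted period construction from the proof of Lemma \ref{only two K3}: by definition $\tilde{\sigma}_i$ is the orthogonal projection of $x$ to $\Lambda_i \otimes \bC$, and using the decomposition $\tilde{\Lambda} = U_i \oplus \Lambda_i$ together with $x \in \eta^{\perp}$ one verifies that the twisted period $\sigma_{\tilde{B}_i} = \tilde{\sigma}_i + \tilde{\sigma}_i \wedge \tilde{B}_i$ coincides with $x$ in $\tilde{\Lambda} \otimes \bC$. An application of \cite[Theorem 3.19]{Yoshioka} then identifies the period of $M_{\eta}(\tilde{S}_i, \tilde{\beta}_i)$ with the period of $X$, and the global Torelli theorem for hyperk\"ahler fourfolds supplies the required birational equivalence. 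The main technical obstacle I anticipate lies precisely in this twisted-period identification: the class $(m_i - n_i)/2$ does not a priori sit inside $\Lambda_i \otimes \bQ$, so one has to interpret $\tilde{B}_i$ as a $B$-field lift on $\tilde{S}_i$ by a careful projection onto $\Lambda_i$ and then confirm that the resulting Azumaya cocycle is compatible with the Hodge structure on $\eta^{\perp} \subset \tilde{\Lambda}$ coming from $X$.
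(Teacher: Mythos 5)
Your proposal is correct and follows essentially the same route as the paper: the paper's own proof likewise reduces to checking that the $\tilde S_i$ satisfy the hypotheses of Lemma \ref{only two K3}, the only substantive step being that the twisted moduli construction applied to $(\tilde S_i,\tilde H_i,\tilde B_i)$ produces a hyperk\"ahler fourfold with the same period as $X$, so that the global Torelli theorem yields the required birational identification. The extra lattice verifications you supply, and in particular the subtlety you flag about projecting $\tilde B_i$ into $\Lambda_i\otimes\bQ$ before reading off the Brauer class, are left implicit in the paper's three-line argument.
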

\begin{proof} The proof follows from the fact that the two K3 surfaces $\tilde{S}_i$ also satisfy the conditions required by Lemma \ref{only two K3}. Indeed, performing the twisted moduli construction we recover a variety $\tilde{X}$ with the same period as $X$. We then apply Torelli theorem for hyperk\"ahler fourfolds to see that $\tilde{X}$ is birational to $X$ giving $(2)$ from Lemma \ref{only two K3}.
\end{proof}

\begin{cor} \label{all smooth} Let $V_4$ be any Verra fourfold such that the surfaces $S_1$ and $S_2$ are smooth.
Then, up to isomorphisms, the set of K3 surfaces $\{\tilde{S}_1, \tilde{S}_2\}$ is a subset of the set of K3 surfaces $\{S_1, S_2\}$.
\end{cor}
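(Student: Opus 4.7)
The plan is to specialize from Corollary \ref{very general K3 are isomorphic} using a one-parameter deformation of $V_4$. I would embed the given $V_4$ in a smooth analytic family $\mathcal{V} \to \Delta$ over a disc $\Delta$ with $\mathcal{V}_0 = V_4$, such that every fiber $\mathcal{V}_t$ has smooth associated K3 surfaces, and such that the subset $D \subset \Delta^* = \Delta \setminus \{0\}$ on which $\mathcal{V}_t$ is a very general Verra fourfold is dense in $\Delta^*$. Such a family exists because smoothness of the $S_i$ is an open condition in the moduli space of Verra fourfolds, whereas the very general locus is the complement of a countable union of proper analytic subvarieties, which meets any generic analytic disc through $V_4$ in a dense subset of its punctured version.

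Over this family one obtains two holomorphic families of polarized K3 surfaces of degree $2$ over $\Delta$. The geometric family $\mathcal{S}_i \to \Delta$ has fibers the double covers of $\bP^2$ ramified along the discriminant sextics of the two projections of $\mathcal{V}_t$, and is smooth by hypothesis. The Torelli family $\tilde{\mathcal{S}}_i \to \Delta$ is built from the holomorphically varying period $x^t$ of the hyperk\"ahler fourfold $X^t$ associated to $\mathcal{V}_t$, projected to $\Lambda_i \otimes \bC$; the surjectivity of the period map together with the local Torelli theorem produce a holomorphic family of polarized K3 surfaces realizing these periods. Both families are classified by holomorphic morphisms $\phi_i, \tilde{\phi}_i : \Delta \to \mathcal{M}_2^{K3}$ to the separated moduli space of polarized K3 surfaces of degree $2$.

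By Corollary \ref{very general K3 are isomorphic}, for every $t \in D$ one has the relation $\tilde{\phi}_i(t) \in \{\phi_1(t), \phi_2(t)\}$. The locus in $\Delta$ where this relation holds is the union of the two closed analytic subsets $\{t : \tilde{\phi}_i(t) = \phi_1(t)\}$ and $\{t : \tilde{\phi}_i(t) = \phi_2(t)\}$, hence it is itself a closed analytic subset. Since it contains the dense set $D$ of $\Delta^*$, it coincides with all of $\Delta$; in particular $\tilde{\phi}_i(0) \in \{\phi_1(0), \phi_2(0)\}$. By the global Torelli theorem for polarized K3 surfaces of degree $2$, this yields $\tilde{S}_i \cong S_j$ for some $j \in \{1,2\}$, which is the desired inclusion $\{\tilde{S}_1, \tilde{S}_2\} \subseteq \{S_1, S_2\}$ up to isomorphism. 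The main obstacle is the second step: endowing the pointwise Torelli construction with the structure of an honest holomorphic family $\tilde{\mathcal{S}}_i \to \Delta$, which requires invoking a universal family over a suitable cover of the period domain and ensuring that the lattice decomposition $\tilde{\Lambda} = U_i \oplus \Lambda_i$ is preserved along the deformation.
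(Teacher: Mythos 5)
Your proposal is correct and follows essentially the same route as the paper: both arguments put $V_4$ into a one-parameter family over a disc whose punctured fibres are (densely) very general, build the geometric families $\sS_i$ and the period-theoretic families $\tilde{\sS}_i$ over the disc, invoke Corollary \ref{very general K3 are isomorphic} on the dense locus, and pass to the special fibre by a closedness/specialization argument. Your formulation via the separated coarse moduli space of degree-$2$ polarized K3 surfaces makes the final specialization step (``isomorphic very general fibres imply isomorphic special fibres'') more explicit than the paper's, but it is the same proof in substance.
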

\begin{proof}
Let $\phi\colon\mathcal{V}\to \Delta$ be a family of Verra fourfolds over a disc admitting very general Verra fourfolds as fibers and degenerating to $V_4$ over $0$. Considering the relative Hilbert scheme of $(1,0)$ (resp.~$(0,1)$) lines we obtain families of twisted K3 surfaces $ \sS_1$ (resp.~$\sS_2$). 
The relative Hilbert scheme of $(1,1)$ conics gives rise to a family of hyperk\"ahler manifolds described in \cite[Section 0.3]{IKKR}. The latter admits a universal marking which induces a family of periods in the lattice $\tilde{\Lambda}$ which has fixed decompositions $\tilde{\Lambda}=U_i\oplus \Lambda_i$. The projections of these periods to $\Lambda_i$ give rise, via Torelli theorem for K3 surfaces, to a family of $K3$ surfaces $\tilde{ \sS}_i$. 
We infer flat families of $K3$ surfaces $\sS_i\to \Delta$ and
$\tilde{\sS}_i\to \Delta$. After reordering we can assume by Corollary \ref{very general K3 are isomorphic} that $\sS_1\to \Delta$ and
$\tilde{\sS}_1\to \Delta$ (resp.~$\tilde{\sS_2}\to \Delta$ and
$\sS_i\to \Delta$ for $i$ either $1$ or $2$) have isomorphic corresponding 
 very general fibres, and hence the corresponding special fibres are also isomorphic.
\end{proof}

Now we are able to prove our main result: for very general elements of certain families of Verra fourfolds, we will get that the surface $\tilde{S}_1$ and $\tilde{S}_1$ are not isomorphic, and this will force the surface $S_1$ and $S_2$ to be not isomorphic as well. 

\begin{thm} \label{Thm:theorem non isom}
Let $\mathcal{V}$ be a family of Verra fourfolds such that the corresponding families of twisted polarized K3 surfaces $(\sS_i,\sH_i, \sB_i) $ for $i\in \{1,2\}$ have trivial Brauer class. Assume in addition that both families $\sS_i$ are Brill-Noether type families. Then, for a very general element of $\mathcal{V}$, the K3 surfaces $S_1$ and $S_2$ are not isomorphic.
\end{thm}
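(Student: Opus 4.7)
By Corollary~\ref{all smooth}, for any smooth Verra fourfold $V_4\in\mathcal V$ the pair $\{\tilde S_1,\tilde S_2\}$ is contained, up to isomorphism, in the pair $\{S_1,S_2\}$. Hence the statement reduces to showing that for a very general $V_4\in\mathcal V$ the two Torelli-constructed K3 surfaces $\tilde S_1$ and $\tilde S_2$ are not isomorphic: the inclusion will then be an equality between two-element sets, and $S_1\not\cong S_2$ follows.

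To distinguish $\tilde S_1$ and $\tilde S_2$ I would translate the problem to the Mukai lattice $\tilde\Lambda$ using the description from Lemma~\ref{only two K3}. Since $\Lambda_i=U_i^{\perp}\subset\tilde\Lambda$, the Picard lattice of $\tilde S_i$ is
\[
\Pic(\tilde S_i)=\pi_i(x)^{\perp}\cap\Lambda_i=x^{\perp}\cap\Lambda_i,
\]
and it contains the projection to $\Lambda_i$ of every class in $\Pic(X)=x^{\perp}\cap\eta^{\perp}$. The hypothesis that both $\sB_i$ are trivial translates, via the Noether-Lefschetz description of the Brauer-vanishing loci, into the orthogonalities $n_1\cdot x=n_2\cdot x=0$; combined with the generic relations $(m_i+n_i)\cdot x=0$ this places the full sublattice $M\oplus N\cong U\oplus U$ inside $\Pic(X)$. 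Intersecting with $\Lambda_i$ yields
\[
\Pic(\tilde S_i)\supseteq\langle\eta\rangle\oplus\overline{U_i}\cong\langle 2\rangle\oplus U,
\]
with polarization $\tilde H_i=\eta$ of square~$2$. The Brill-Noether type hypothesis forces a further fixed sublattice $L\subset\eta^{\perp}$ into $\Pic(X)$, and its projection to $\Lambda_i$ saturates $\Pic(\tilde S_i)$ to an explicit lattice that depends on $i$.

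The central step is then to verify that, once $L$ is incorporated, these polarized lattices are no longer isometric as Hodge structures: the asymmetric choice of splittings $U_1=\langle m_1+n_1,m_2\rangle$ and $U_2=\langle m_2+n_2,m_1\rangle$ is crucial, since the projections of $L$ into $\overline{U_1}$ and $\overline{U_2}$ differ. One should then be able to distinguish $\Pic(\tilde S_1)$ and $\Pic(\tilde S_2)$ by a discrete invariant — for instance the discriminant form, the divisibility of the Brill-Noether class relative to $\tilde H_i$, or the configuration of $(-2)$-classes orthogonal to $\tilde H_i$. Global Torelli for K3 surfaces will then rule out $\tilde S_1\cong\tilde S_2$.

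The main obstacle I anticipate is precisely this lattice comparison, because the splittings $(U_1,\Lambda_1)$ and $(U_2,\Lambda_2)$ look superficially symmetric: the argument must exclude not only strict lattice isomorphisms but also the action of $O(\tilde\Lambda)$ exchanging the two splittings once $L$ is fixed. To promote the conclusion from a generic statement to a very general statement in $\mathcal V$, I would combine this lattice comparison with a Noether-Lefschetz countability argument, using the explicit Macaulay2 example of Section~\ref{Sec:Construction Family} as a witness that the locus where $\tilde S_1\cong\tilde S_2$ is a proper subvariety of $\mathcal V$.
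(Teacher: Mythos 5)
Your opening reduction via Corollary~\ref{all smooth} matches the paper's, but the two steps that carry the actual content both go wrong. First, your translation of the triviality of $\sB_i$ into the orthogonalities $n_1\cdot x=n_2\cdot x=0$ is too strong: together with $(m_i+n_i)\cdot x=0$ it would put all of $M\oplus N$ into $\Pic(X)$ and, as you yourself write, force $\Pic(\tilde S_i)\supseteq\langle 2\rangle\oplus U$, i.e.\ Picard rank at least $3$ --- whereas an $18$-dimensional family of Verra fourfolds has very general member whose associated K3 surfaces have Picard rank exactly $2$. Triviality of $\alpha_1$ means that the homomorphism $v\mapsto \tfrac{1}{2}(m_1-n_1)\cdot v$ vanishes on the transcendental lattice; in the paper this becomes a parity condition on the coefficients of the second generator $\tau_1$ of the rank-two Picard lattice (all coefficients even except the one on $n_2-m_2$), not the algebraicity of $n_1$ and $n_2$.

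Second, and more seriously, the ``central step'' you propose --- distinguishing $\tilde S_1$ from $\tilde S_2$ by a discrete invariant of their polarized Picard lattices --- cannot succeed. The two surfaces are Fourier--Mukai partners, so their transcendental lattices are Hodge-isometric; by unimodularity of $H^2$ the discriminant forms of the Picard lattices then agree, and in fact the paper's explicit coefficients show that $\tau_2$, with coordinates $(-2\gamma_1,-\gamma_2/2)$ in $\overline{U_2}$ and unchanged elsewhere, has the same square and the same pairing with $\eta$ as $\tau_1$, so $\langle\eta,\tau_1\rangle$ and $\langle\eta,\tau_2\rangle$ have identical Gram matrices. No discriminant form, divisibility, or configuration of $(-2)$-classes can separate them. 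The mechanism actually used is finer: for a very general member the Hodge isometry $T_{\tilde S_1}\cong T_{\tilde S_2}$ is unique, and the explicit one (multiply $\gamma_1$ by $-2$, divide $\gamma_2$ by $-2$) does not extend to an isometry of the integral lattices $H^2(\tilde S_i,\bZ)$; an isomorphism of surfaces would produce exactly such an extension, a contradiction. Since your sketch leaves its key step at ``one should then be able to'', and the invariant it reaches for does not exist, the argument as proposed does not close.
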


\begin{proof}
Let $V_4$ be a very general member of the family $\mathcal{V}$. It falls into the hypothesis of Corollary \ref{all smooth} and hence we can compare the two K3 surfaces $\tilde{S}_i$ defined previously. If these are not isomorphic, then the surfaces $S_1$ and $S_2$ are not isomorphic as well by Corollary \ref{all smooth}. Assume that the period on the hyperk\"ahler $X$ is explicitly given by
\begin{equation*}
x = \lambda_1(i_1-i_2) + \delta_1(m_1-n_1) + \delta_2(m_2-n_2) + \ldots,
\end{equation*}
Here and in the following explicit computations we will omit all the coefficients which are not relevant for our computation, writing just ``$\ldots$'' instead.
Note that the Hodge structure on $\tilde{S}_i$ is obtained by projecting the element $x$ to $\Lambda_i$ obtaining, in the case of $\tilde{S}_1$, the element $\sigma_1$, given by
\begin{equation*}
\lambda_{1}(i_1-i_2)-2\delta_1 n_1+\delta_2(n_2-m_2)+\ldots.
\end{equation*}
The Picard lattice of $\tilde{S}_i$ is given by $\sigma_i^\perp \cap \Lambda_i$. Let us assume by genericity of $V_4$ that the Picard rank of the K3 surfaces is equal to $2$. We then have also a second polarization $\tau_i$ on $\tilde{S}_i$. 

The assumption about the twists $\alpha_1$ and $\alpha_2$ being trivial can be translated to numerical conditions on $\tau_i$. Recall that the Brauer class is a map $T_{\tilde{S}_1} \to \bQ/\bZ$ and in our case such a map is given by the multiplication by $1/2(m_1-n_1)$. Up to elements in half of the Picard lattice, we see that this is equivalent to the multiplication by $1/2(n_2-m_2)$. So the class $\alpha_1$ is trivial when $(m_2-n_2) \cdot T_{\tilde{S}_1}$ is even. In order to rephrase this more explicitly, consider an element $v \in T_{\tilde{S}_1}$, it can be written as 
$$v=\epsilon_1(n_2-m_2)+\epsilon_2 n_1 + \ldots. $$
Recall that $T_{\tilde{S}_1}=\eta^\perp \cap \tau_1^\perp$, so we can choose $v \in \Lambda_1$ and then impose the orthogonality conditions. We can see explicitly that $\alpha_1$ is trivial if and only if for every $v \in \Lambda_1$, $\tau_1 \cdot v =0$ implies that the coefficient of the element $n_1$ of the basis is even.

We want to reformulate this condition in terms of the coefficient of the second generator $\tau_1$ of the Picard lattice of $\tilde{S}_1$. Explicitly, let
$$\tau_1=\gamma_1(n_2-m_2)+\gamma_2 n_1 + \ldots.$$
Since the lattice $2U+2E_8(-1)$ is unimodular, we get that $\alpha_1$ is trivial if and only if all the coefficients of $\tau_1$ are even, except $\gamma_1$ being odd. For a fixed $\tau_1$ with this choice of coefficients, consider a very general $\sigma_1$ defining the K3 surface $\tilde{S}_1$ with Picard lattice $\langle \eta, \tau_1 \rangle$. Then we can recover $\sigma_2$:
$$\sigma_2=\lambda_1(i_1-i_2)-\delta_1 n_1+2\delta_2(n_2-m_2)+ \ldots.$$

We can also recover $\tau_2$ from $\tau_1$. Indeed, we know that $\sigma_i\cdot \tau_i$ is independent of $i$, so if $\tau_1$ is determined by the coefficients $(\gamma_1, \gamma_2)$ as above, then $\tau_2$ is given by $(-2\gamma_1, -1/2 \gamma_2)$.

Thanks to the explicit description of the coefficients we carried out previously, we are able to describe a non trivial isomorphism between the transcendental lattices of $\tilde{S}_1$ and $\tilde{S}_2$. We divide by $-2$ the coefficient $\gamma_2$ which we know to be even and multiply by $-2$ the value $\gamma_1$. This sends $\tau_1$ to $\tau_2$ and gives an isomorphism $T_{\tilde{S}_1} \cong T_{\tilde{S}_2}$. Notice that the transcendental lattice is orthogonal to the Picard lattice, so it is saturated. So we have 
$$(T_{\tilde{S}_i} \otimes \bC) \cap H^2(\tilde{S}_i,\bZ) = T_{\tilde{S}_i}.$$
This implies there is a unique extension of the isomorphism to $T_{\tilde{S}_i} \oplus \Pic(\tilde{S}_i)$, and hence on the whole $H^2(\tilde{S}_i,\bC)$. As a consequence such extension does not preserve the groups $H^2(\tilde{S}_i,\bZ)$, since one coordinate is divided by $-2$.

It is a general fact that, once fixed a lattice $L\subset \Lambda_{K3}$ of the K3, the transcendental lattice of a general element of the family of K3 surfaces whose Picard lattice contains $L$ has no nontrivial Hodge automorphisms. As a consequence, for a general element in our family  the isomorphism $T_{\tilde{S}_1} \cong T_{\tilde{S}_2}$ must be unique. If $\tilde{S}_1$ and $\tilde{S}_2$ were isomorphic, then the latter would come from a Hodge isomorphism of the whole lattices $H^2(\tilde{S}_1,\mathbb{Z})$ and $H^2(\tilde{S}_2,\mathbb{Z})$ of the two K3 surfaces. This would be a contradiction, because the isomorphism we have found does not extend. Hence the two transcendental lattices are isometric, but the whole lattices are not.
\end{proof}

\begin{cor} \label{cor: corollary non isom} Let $\mathcal{V}$ be any irreducible $18$ dimensional family of Verra fourfolds whose associated quadric fibrations induce trivial Brauer classes $\sB_i$ on the surfaces from the family $\sS_i$ with $i\in \{1,2\}$, then the surfaces $S_1$ and $S_2$ associated to a very general Verra fourfold of $\mathcal{V}$ are non-isomorphic. 
\end{cor}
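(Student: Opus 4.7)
The plan is to derive the corollary by verifying that any such family $\mathcal{V}$ satisfies the hypotheses of Theorem \ref{Thm:theorem non isom}. The triviality of the Brauer classes $\sB_i$ is part of the assumption of the corollary, so the whole task reduces to checking that both induced families $\sS_i$ of polarized K3 surfaces are Brill-Noether type families.

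The strategy is a dimension count. The moduli space of Verra fourfolds is $19$-dimensional, and by \cite[Proposition 2.1]{MR1738215} (as recalled in Section \ref{sec:preliminaries}) the locus where $\beta_i = 0$ is a Noether-Lefschetz divisor of dimension $18$. Since $\mathcal{V}$ is an irreducible $18$-dimensional subfamily contained in both divisors $\{\beta_1=0\}$ and $\{\beta_2=0\}$, it must coincide with an irreducible component of each. On such a component the K3 surface $S_i$ acquires an extra Picard class beyond its polarization $H_i$ coming from the trivialization of $\beta_i$, so the generic Picard lattice of $\sS_i$ over $\mathcal{V}$ is a well-defined sublattice $L_i$ of the K3 lattice of rank at least $2$.

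To identify $\sS_i$ with a Brill-Noether type family we invoke the reconstruction result of \cite[Section 9.8]{MR2166182} which produces a Verra fourfold from the pair $(S_i, \beta_i)$, up to finitely many choices. This implies that the moduli map $\mathcal{V} \to \mathcal{F}_2$ sending $V_4 \mapsto S_i$ is generically finite, so its image is an $18$-dimensional irreducible subvariety of the Noether-Lefschetz locus $\mathcal{F}_{L_i}$ of K3 surfaces of degree $2$ whose Picard lattice contains $L_i$. Since $\dim \mathcal{F}_{L_i} = 20 - \mathrm{rk}(L_i)$, this forces $\mathrm{rk}(L_i) = 2$ and the image of $\mathcal{V}$ dominates $\mathcal{F}_{L_i}$, exhibiting $\sS_i$ as a Brill-Noether type family. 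Theorem \ref{Thm:theorem non isom} then directly yields the non-isomorphism of $S_1$ and $S_2$ for a very general $V_4 \in \mathcal{V}$.

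The main delicate point is the generic finiteness of the moduli map $V_4 \mapsto S_i$ when restricted to the divisor $\{\beta_i = 0\}$, since the reconstruction of \cite{MR2166182} is most naturally stated at the general (non-trivial Brauer class) point of the Verra moduli. One can handle this either by a specialization argument from the generic case, or by a direct lattice-theoretic computation along the lines of the proof of Lemma \ref{only two K3}, which shows that only finitely many Verra fourfolds with prescribed associated hyperk\"ahler period can give rise to a fixed $(S_i, H_i)$ with trivial Brauer class.
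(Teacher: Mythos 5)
Your proposal is correct and follows essentially the same route as the paper: reduce to Theorem~\ref{Thm:theorem non isom} by a dimension count showing that an $18$-dimensional family with trivial Brauer classes (hence Picard rank $\geq 2$ on the $S_i$) must fill out a full rank-$2$ Noether--Lefschetz (Brill--Noether type) family. You are in fact more explicit than the paper's two-line argument about why the induced families $\sS_i$ are themselves $18$-dimensional --- namely the generic finiteness of $V_4\mapsto S_i$ via the reconstruction of \cite{MR2166182} --- a point the paper passes over silently.
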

\begin{proof} Note that the induced Brauer classes can be trivial only if the K3 surfaces $S_i$ have Picard number at least two. So families $\sS_i$ are 18 dimensional families of polarized K3 surfaces of Picard number greater than or equal to $2$. As a consequence each $\sS_i$ contains (and hence is equal to) a locally complete family of K3 surfaces whose Picard lattices contain a fixed lattice of rank 2 i.e. a Noether Lefschetz type family of dimension 18. It follows that $\mathcal{V}$ satisfies the assumptions of Theorem \ref{Thm:theorem non isom}.
\end{proof}

Notice that this result is slightly different from an analogous result of \cite{KS} made in the context of K3 surfaces of degree 2 with another type of Brauer classes. We proved that for any $18$-dimensional family of Verra fourfold such that the corresponding K3 surfaces have trivial Brauer classes, the K3 surfaces are not isomorphic, while in Theorem 1.9 of \cite{KS} there are some families for which the corresponding surfaces are isomorphic and some families for which they are not.

\subsection{The Abel Jacobi map} \label{Sec:AJ}
All the lattice computations carried out previously could be performed in the cohomology $H^4_0(V_4,\bZ)$ directly, without means of the hyperk\"ahler fourfold $X$. 
Indeed, we can compare the primitive cohomologies $H^2_0(X,\bZ)$ and $H^4_0(V_4,\bZ)$, passing through the primitive cohomology of the K3 surfaces $S_1$ and $S_2$. 

In what follows we will always write $S_i$, meaning that this holds for both $i=1, 2$. We know that $X$ is birational to a moduli space of twisted sheaves on $S_i$. We can then apply \cite[Theorem 3.19]{Yoshioka} in order to obtain a Hodge isometry between $H^2_0(X,\bZ)$ and an index $2$ sublattice, orthogonal to a Mukai vector $v_i$ of $H^2_0(S_i,\mathbb{Z})$. The Brauer class related to $v_i$ corresponds to the Brauer class given by the fibration $p_i:V_4\to \mathbb{P}^2$ by \cite[Proposition 4.1]{IKKR}. 

On the Verra fourfold's side, the quadric fibrations related to $V_4$ can be used to apply \cite[Prop.II.2.3.1]{Laszlo}. We find the following short exact sequence 
$$0 \to H^4_0(V_4,\bZ) \xrightarrow{\Phi} H^2_0(S_i,\bZ) \to \bZ / 2 \bZ \to 0.$$
As a consequence $\Phi$ gives an isomorphism between $H^4_0(V_4,\bZ)$ and an index $2$ sublattice of $H^2_0(S_i,\bZ)$. This time, the sublattice is defined by a Brauer class $\Gamma(V_4)$ which by \cite[Sec. III]{Laszlo} defines a theta characteristic on the discriminant sextic curve associated to $S_i$. By the classification of \cite{MR2166182}, such a theta characteristic corresponds to the Brauer class of the quadric fibration $V_4 \to \mathbb{P}^2$, so we can compare this result with the previous one concerning the hyperk\"ahler $X$.
$$\xymatrix{
              & H^2_0(S_i,\bZ) &\\
H^2_0(X,\bZ)\ar[r]^-{\Theta_v}  &  v^\perp\ar@{^{(}->}[u]  & H^4_0(V_4,\bZ)\ar[l]_-{\Phi}
}$$
We can compose $\Theta_v$ and the inverse of $\Phi$ to obtain a Hodge isomorphism between $H^2_0(X,\bZ)$ and $H^4_0(V_4,\bZ)$.

\section{The family of Verra fourfolds}  \label{Sec:Construction Family}
We want to exploit the results from the previous section in order to provide an example of the situation described in \cite[Section 2.6.2]{KS}.
Assume that $V_4$ is a Verra fourfold such that the Brauer classes related to two K3 surfaces $S_i\to \bP^2$ are trivial. We can then apply \cite[Theorem 2.18]{KS} and deduce the relation
$$\bigl([S_1]-[S_2]\bigr)\bL=0.$$

In order to make the two Brauer classes vanish, we can exploit Proposition 2.1 of \cite{MR1658216}, looking for examples having a zero-cycle of odd degree in both projections $p_i:V_4 \to \bP^2$. The diagonal $\Delta \subset \bP^2 \times \bP^2$ gives rise to an odd cycle of $X$ provided the fact that $V_3$ is totally tangent to $\Delta$, that is $V_3$ restricted to the diagonal is a double conic. Indeed, in that case the preimage of the diagonal splits into a union of two components each being a section of both projections. 

Let us now describe the equations of such threefolds $V_3 \subset \bP^2(x_0, x_1, x_2) \times \bP^2(y_0, y_1, y_2)$. Choose a homogeneous polynomial $q$ of degree $2$ in three variables and three bihomogeneous polynomials $l_i$ of degree $1$ in $(x_0, x_1, x_2)$ and in $(y_0, y_1, y_2)$. Let $V_3$ then be defined by the following equation
$$q(x_0, x_1, x_2)\cdot q(y_0, y_1, y_2) + (x_0y_1-x_1y_0)\cdot l_1 + (x_0y_2-x_2y_0)\cdot l_2 +(x_1y_2-x_2y_1)\cdot l_3=0.$$
Let us now compute the dimension of the space parameterizing such Verra threefolds, and therefore the Verra fourfolds we are interested in. The polynomial $q$ counts for $6$ parameters, and together the linear form $l_i$ count for $21$, that is $3 \cdot 9$ minus $6$ relations. Removing the number of automorphisms of $\bP^2 \times \bP^2$ fixing the diagonal we obtain a family of dimension $18$ of such Verra fourfolds. Notice that, up to automorphisms of $\bP^2$, we can assume that $q$ is fixed, being for example the Fermat quadric $x_0^2+x_1^2+x_2^2$.

We described a family $\sF$ of Verra fourfolds of dimension $18$ such that the Brauer classes of $\pi_i$ vanish. We expect a very general member of $\sF$ to give rise to an example of non-isomorphic $\mathbb{L}$-equivalent K3 surfaces. For this only smoothness of the associated K3 surfaces needs to be proven.

\medskip
\textbf{An explicit example.} We construct here an explicit example of a smooth Verra fourfold of the family $\sF$ described before. We will check that the two associated planar sextic curves are smooth as well. 
As a consequence we get that the very general member of $\sF$ does satisfy the smoothness requirements as well, giving us an open subset of an $18$ dimensional family of possible candidates. The following calculations are provided by means of Macaulay2 \cite{M2}. In order to speed up the computation it is possible to use a reduction modulo a (not too small) prime $p$ by changing the base field to $\bZ / p \bZ$.
\begin{m2_code}
-- Base field
BF=QQ;
R1=BF[y_0..y_2];
R2=BF[x_0..x_2];
R=BF[x_0..x_2,y_0..y_2];
-- Construction of the Verra threefold
Q1=((mingens (ideal(x_0,x_1,x_2))^2)*random(R^6,R^1))_0_0;
Q2=sub(Q1, {x_0=>y_0, x_1=>y_1, x_2=> y_2});
for i from 1 to 3 do
	L_i=matrix{{x_0..x_2}}*random(R^3,R^1)*matrix{{y_0..y_2}}*random(R^3,R^1);
F=(Q1*Q2+(x_0*y_1-x_1*y_0)*L_1+(x_0*y_2-x_2*y_0)*L_2+(x_1*y_2-x_2*y_1)*L_3)_0_0;
-- Check that it is smooth
dim saturate ((ideal (singularLocus ideal F)),(ideal(x_0..x_2)*ideal(y_0..y_2)))
\end{m2_code}

\medskip
\textbf{The corresponding sextic curves.}
In order to construct the example we need the sextic curves associated to the Verra fourfold to be smooth such that they give rise to K3 surfaces without blowing up singularities. The following code requires the previous one to be executed.

\begin{m2_code}
-- Construction of the first sextic
for i from 0 to 5 do 
	V1_i=((coefficients(F,Variables=>{x_0,x_1,x_2}))_1)^{i}_0_0;
S1=(map(R1,R)) det matrix{{2*V1_0,V1_1,V1_3},{V1_1,2*V1_2,V1_4},{V1_3,V1_4,2*V1_5}};
dim saturate ideal singularLocus ideal S1
 
-- Construction of the second sextic
for i from 0 to 5 do 
	V2_i=((coefficients(F,Variables=>{y_0,y_1,y_2}))_1)^{i}_0_0;
S2=(map(R2,R)) det matrix{{2*V2_0,V2_1,V2_3},{V2_1,2*V2_2,V2_4},{V2_3,V2_4,2*V2_5}};
dim saturate ideal singularLocus ideal S2
\end{m2_code}

\begin{prop}
There exists an $18$-dimensional family $\sF$ of Verra fourfolds such that the very general $V_4$ in $\sF$ has the following properties:
\begin{itemize}
\item The K3 surfaces $S_i$ associated to $V_4$ are double cover of $\bP^2$ ramified over smooth sextics;
\item The Brauer classes related to the $S_i$ are trivial.
\item $S_1$ and $S_2$ are not isomorphic. As a consequence, the class $[S_1]-[S_2]$ is not trivial in the Grothendieck ring.
\end{itemize}
\end{prop}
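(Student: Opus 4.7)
The plan is to verify the three bullet points in order, leveraging the construction of $\sF$, the explicit Macaulay2 example, and the main Corollary \ref{cor: corollary non isom}.

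First I would address smoothness. The Macaulay2 computation exhibits a specific $V_4 \in \sF$ whose total space is smooth and whose two discriminant sextics in $\bP^2$ are smooth plane curves. Smoothness is an open condition in the parameter space of $\sF$ (the vanishing of the discriminants of $V_4$, of $S_1$, and of $S_2$ each define closed subsets), so a single explicit smooth member guarantees that a nonempty Zariski-open subset $\sF^{\circ} \subset \sF$ parametrizes fourfolds with the desired smoothness. Over this open locus the double covers $S_i \to \bP^2$ ramified over the two sextics are smooth K3 surfaces of degree $2$, giving the first bullet.

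Next, for the triviality of the Brauer classes, I would use the construction of $\sF$: every $V_3 \in \sF$ has been arranged to be totally tangent to the diagonal $\Delta \subset \bP^2 \times \bP^2$, so the preimage of $\Delta$ in $V_4$ splits into two components, each a section of both projections $p_i \colon V_4 \to \bP^2$. Each such component defines a zero-cycle of odd degree (in fact degree $1$) in the generic fibres of both $p_i$, and Proposition 2.1 of \cite{MR1658216} then forces the Brauer classes $\sB_1$ and $\sB_2$ of the associated quadric fibrations to be trivial on the whole family $\sF$, giving the second bullet.

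For the third bullet I would invoke Corollary \ref{cor: corollary non isom}. The family $\sF^{\circ}$ is irreducible of dimension $18$ and, by the preceding step, has trivial induced Brauer classes $\sB_i$ on the associated K3-families $\sS_i$; the corollary then yields non-isomorphism of $S_1$ and $S_2$ for the very general element of $\sF^{\circ}$. Finally, the relation $([S_1]-[S_2])\cdot\bL = 0$ recalled in the introduction combined with non-isomorphism forces $[S_1] \neq [S_2]$ in $K_0(\Var/\bC)$, since isomorphic classes would give a trivial difference but, conversely, a trivial difference with non-isomorphic K3 surfaces of Picard rank two would contradict the non-stably-birational invariance argument of \cite[Corollary 2.6]{LaLu} (two K3 surfaces of Picard rank $\leq 2$ with equal Grothendieck class must be isomorphic in this setting).

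The main obstacle is really bullet three: everything depends on verifying that $\sF^{\circ}$ actually falls under the hypotheses of Corollary \ref{cor: corollary non isom}, i.e.\ that it is genuinely an $18$-dimensional irreducible family (and not, say, a higher-codimension locus inside some bigger Noether--Lefschetz divisor that could behave pathologically). The dimension count carried out in Section \ref{Sec:Construction Family}, together with the fact that the smoothness locus is Zariski-open and the associated K3 families have Picard rank at least two, handles this, so the remaining work is essentially assembling the pieces and citing the smooth explicit example to guarantee nonemptiness.
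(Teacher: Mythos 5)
Your proposal is correct and follows essentially the same route as the paper: the explicit Macaulay2 member plus openness of smoothness handles the first bullet, the total tangency to the diagonal together with Hassett's odd-degree zero-cycle criterion handles the second, and Corollary \ref{cor: corollary non isom} handles the third, with Larsen--Lunts converting non-isomorphism into $[S_1]\neq[S_2]$. The only cosmetic remark is that your Larsen--Lunts step needs no Picard-rank hypothesis: equality of classes implies stable birationality, which for K3 surfaces (minimal, non-uniruled) already forces isomorphism.
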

\begin{proof} 
The previous constructions and computations show that the first two properties hold. The third property is non-trivial, and it is a consequence of Corollary \ref{cor: corollary non isom}.
\end{proof}
To complete the picture we take the opportunity to use our calculations above to provide an explicit example in which  the sextic curves are not projectively isomorphic, which implies that the polarized K3 surfaces $(S_1,H_1)$ and $(S_2,H_2)$ are not equivalent and completes the proof of Corollary \ref{cor: using S not isomorphic}. The following code requires the previous one to be executed.

\begin{m2_code}
-- Check that the two sextics are not isomorphic
Rt=BF[z_1..z_9][x_0..x_2,y_0..y_2];
FS=map(Rt,R1, transpose sub (matrix{{z_1,z_2,z_3},{z_4,z_5,z_6},{z_7,z_8,z_9}}*
	transpose matrix{{y_0,y_1,y_2}}, {y_1=>x_1,y_2=>x_2,y_0=>x_0}));
fS1=FS(S1);
PT=((map(Rt,R1))(S1))-fS1;
SFD=(coefficients(PT))_1;
IDE=(ideal SFD)+(ideal(1-det(matrix{{z_1,z_2,z_3},{z_4,z_5,z_6},{z_7,z_8,z_9}})));
dim IDE
\end{m2_code}

\section{Comparison with the cubic fourfold case and more} \label{Sec:Plans}
Cubic fourfolds are hypersurfaces in $\bP^5$ of degree $3$. Their Hodge theory and derived category have been extensively studied over the years. A Torelli-type theorem for smooth cubic fourfolds was proved in \cite{MR860684}, and the image of the period map was studied in \cite{MR2507640}. The moduli space of cubic fourfolds was studied in \cite{MR1738215}, where the divisors $\sC_d$ parametrizing special cubic fourfolds of discriminant $d$ are described. Recall that a cubic fourfold is special if it contains a surface not homologous to a complete intersection. In this section we are interested in cubic fourfolds containing a plane. They belong to the divisor $\sC_8$, see \cite[Section 4.1.1]{MR1738215}. The derived category of a cubic fourfold was studied in \cite{MR2605171}. More recently, deep relations between special cubic fourfolds and twisted K3 surfaces were studied in \cite{MR3705236}. There are many possible comparisons between Verra fourfolds and cubic fourfolds. It is well known (see \cite{MR818549}) that the Fano variety of lines of a cubic fourfold is a hyperk\"ahler manifold. Moreover, it has been proved in \cite{MR2166182} that both Verra fourfolds and cubic fourfolds in $\sC_8$ are geometrical realizations of $2$-torsion elements of the Brauer group of a K3 surface. We want to capture a similar situation as for the Verra fourfolds, where we can obtain two K3 surfaces by exploiting the two projections to $\bP^2$.

One possibility is to study the family of cubic fourfolds with two nodes, which has dimension $18$ in the moduli space of all cubic fourfolds. The results of \cite{MR2605171} guarantee that there are two untwisted K3 surfaces associated to such a cubic fourfold, and it is easy to prove that they are also $\bL$-equivalent. In fact, it is possible to show that the two singular K3 surfaces are also isomorphic, giving then a trivial $\bL$-equivalence.

Another instance of the problem is provided by cubic fourfolds containing two planes, again a family of dimension $18$ in the moduli space of all cubic fourfolds, consisting of two components depending on the planes being disjoint or not. The first case has been studied in \cite[Example 5.9]{GS}. Let $Y$ be a general cubic fourfold containing two disjoint planes. The projection from each of these planes provides a quadric fibration over $\mathbb{P}^2$ whose discriminant locus is a sextic curve and for which the second plane is a section. We thus have two K3 surfaces associated to $Y$. The two K3 surfaces are again derived equivalent and $\mathbb{L}$-equivalent. But by \cite{MR860684} the K3 surfaces obtained in this case are also isomorphic, which makes these equivalences trivially fulfilled. 

\begin{question}
Assume $Y$ to be a cubic fourfold in $\sC_8$ containing two planes intersecting in one point. Are the two associated K3 surfaces isomorphic?
\end{question}

A different context in which we can find several K3 surfaces associated to one variety is the case of double EPW sextics, which has been studied in \cite{Ogrady}.
Let $A$ be a lagrangian space in $\wedge^3\bC^6$ with respect to a fixed symplectic form. Let $\bG(3,6)$ be the Grassmannian of $3$-spaces in $\bP^6$. Suppose that $\bP(A)\cap \bG(3,6)$ is a finite set such that $T_{P_i} \cap \bP(A)=P_i$ for $i=1,\dots, k$, where $T_{P_i}$ is the projective tangent space to $\bG(3,6)$ at $P_i$.
In the forthcoming paper \cite{GKV}, the first author and Verra construct in this context $k-1$ nodal Verra fourfolds $V_j$ with associated K3 surfaces $S_{j,1}$ and $S_{j,2}$. It is also possible to prove that all the surfaces $S_{j,i}$ are twisted derived equivalent for every $j$ and $i$. 
This situation suggests in fact the following
\begin{problem} \label{last problem} Consider a hyperk\"ahler variety of dimension $2n$, singular along the union of hyperk\"ahler manifolds of dimension $2(n-1)$.
Are all such submanifolds twisted $\bD$-equivalent?
\end{problem}
If the number of components of the singular locus of those hyperk\"ahler varieties is big enough, we expect they are in fact $\bD$-equivalent.

In the case of nodal Verra fourfolds above we expect that the situation when the K3 surfaces are $\bD$-equivalent happens when $k\geq 16$. It should be still possible to check that such K3's are $\bL$-equivalent as well. Again one has to face the problem of the non-triviality of the examples, which is now more difficult due to the fact that the fourfolds $V_j$ are nodal. Nevertheless, we expect some of the K3 surfaces to be non-isomorphic.

Further cases to check in the context of Problem \ref{last problem} and resulting equivalences are hyperk\"ahler eightfolds constructed in \cite{LLSS} induced by a two nodal cubic.
Moreover, the EPW cubes induced by a Lagrangian $A\subset \wedge^3\mathbb{C}^6$ intersecting the Grassmanian transversally in a finite number of points as above also provide candidates for $\bD$-equivalent varieties of dimension $4$. Hopefully, these examples could provide some new background for the study of Problem \ref{Conj:KS} in dimension higher than $2$.

\section*{Acknowledgements}
G.K. is supported by the project 2013/08/A/ST1/00312. M.K. is supported by the project NCN 2013/10/E/ST1/00688,
R.M. is supported by the Department of Mathematics and Natural Sciences of the University of Stavanger in the framework of the grant 230986 of the Research Council of Norway and  by Firb 2012 Moduli spaces and applications granted by Miur. It is a pleasure to thank Sergey Galkin, Atanas Iliev, Giovanni Mongardi and Kristian Ranestad for their useful suggestions and helpful discussions. The authors are especially grateful to Alexander Kuznetsov for his valuable comments and suggestions he gave us and to Atsushi Ito, Makoto Miura, Shinnosuke Okawa, and Kazushi Ueda for their comment on a preliminary version of this work. We want also to thank an anonymous referee for the careful reading of the manuscript and helpful comments and suggestions.


\end{document}